\documentclass[12pt,english]{amsart}

\usepackage[inner=1in,outer=1in,top=1in,bottom=1in]{geometry}
\linespread{1.05}
\usepackage{amssymb}
\usepackage{paralist}
\usepackage{babel}
\usepackage[pdftex]{graphicx}    
\usepackage[leqno]{amsmath}
\usepackage{amsthm}
\usepackage{emptypage}
\usepackage{hyperref}

\usepackage{tikz-cd}

\usepackage[utf8]{inputenc}
\usepackage{enumerate}
\usepackage{amsfonts,mathrsfs}
\usepackage{ulem}
\usepackage{paralist}
\usepackage[leqno]{amsmath}
\usepackage{graphics}
\usepackage[absolute]{textpos}

\newcommand\mbb{\mathbb}

\newcommand\C{\mbb{C}}

\newcommand\R{\mbb{R}}

\DeclareMathOperator*{\Mat}{Mat}

\DeclareMathOperator*{\adj}{adj}

\DeclareMathOperator*{\Sym}{Sym}

\renewcommand\epsilon{\varepsilon}

\renewcommand\mod{\mathop{\rm mod}}
\renewcommand\phi{\varphi}

\renewcommand\theta{\vartheta}
\theoremstyle{plain}
\newtheorem{Thm}{Theorem}
\newtheorem{Prop}[Thm]{Proposition}
\newtheorem{Cor}[Thm]{Corollary}
\newtheorem{Lemma}[Thm]{Lemma}

\newtheorem{Conjecture}[Thm]{Conjecture}
\newtheorem*{Thm*}{Theorem}
\newtheorem*{Prop*}{Proposition}
\newtheorem*{Cor*}{Corollary}
\newtheorem*{Lemma*}{Lemma}
\newtheorem*{Sublemma*}{Sublemma}
\newtheorem*{Conjecture*}{Conjecture}
\theoremstyle{definition}

\newtheorem{Example}[Thm]{Example}

\newtheorem{Question}[Thm]{Question}

\newtheorem*{Constr*}{Construction}
\newtheorem*{Def*}{Definition}
\newtheorem*{Defs*}{Definitions}
\newtheorem*{Example*}{Example}
\newtheorem*{Examples*}{Examples}
\newtheorem*{Exercise*}{Exercise}
\newtheorem*{LemmaDef*}{Lemma and Definition}
\newtheorem*{Notation*}{Notation}
\newtheorem*{Problem*}{Problem}
\newtheorem*{Question*}{Question}
\newtheorem*{Remark*}{Remark}
\newtheorem*{Remarks*}{Remarks}
\newtheorem*{Warning*}{Warning}

\newtheorem*{Text*}{}
\numberwithin{equation}{section}
\numberwithin{Thm}{section}

\begin{document}

\title{Two results on the size of spectrahedral descriptions}

 \author{Mario Kummer}
 \address{Universit\"at Konstanz, Germany} 
 \email{Mario.Kummer@uni-konstanz.de}

\begin{abstract}
 A spectrahedron is a set defined by a linear matrix inequality. Given a spectrahedron we are interested in the question
 of the smallest possible size $r$ of the matrices in the description by linear matrix inequalities.
 We show that for the $n$-dimensional unit ball $r$ is at least $\frac{n}{2}$.
 If $n=2^k+1$, then we actually have $r=n$. The same holds true for any compact convex set in $\R^n$ defined by a quadratic polynomial.
 Furthermore, we show that for a convex region in $\R^3$ whose algebraic boundary is smooth and defined by a cubic polynomial
 we have that $r$ is at least five. More precisely, we show that if $A_1,A_2,A_3 \in \Sym_r(\R)$ are real symmetric matrices such that
 $f(x,y,z)=\det(I+A_1 x+A_2 y+A_3 z)$ is a cubic polynomial, 
 the surface in complex
 projective three-space with affine equation $f(x,y,z)=0$ is singular.
\end{abstract}

\maketitle

\section{Introduction}
A \textit{spectrahedron} is the solution set of a linear matrix inequality, i.e. a  set of the form 
\[C=\{p\in \R^{n}: \,\,A(p)= A_0 +A_1  p_1 \ldots + A_n p_n \succeq 0, \,\, \textrm{i.e. } A(p) \textrm{ is positive semidefinite}\} \]
where $A_0,\ldots , A_n \in \Sym_r(\R)$ are real symmetric matrices. Spectrahedra are closed convex and semi-algebraic sets.
They are exactly the feasible sets of semidefinite programming (SDP).
For practical application the question of the size of the matrices in the description of the spectrahedron is very important.
Thus one is interested in the following  question:
\begin{Question}\label{qut:haupt}
 Given a spectrahedron $C \subseteq \R^n$, what is the smallest size  $r$ of real symmetric matrices  $A_0,\ldots , A_n \in \Sym_r(\R)$
 such that we have
  \[C=\{p\in \R^{n}: \,\,A(p)= A_0 +A_1  p_1 \ldots + A_n p_n \succeq 0\} ?\]
\end{Question}
A natural lower bound for the size of matrices is given by the degree of the \textit{algebraic boundary} of $C$, i.e. the Zariski closure of the
Euclidean boundary of $C$. Conversely, it follows from the Theorem of Helton and Vinnikov \cite[Thm. 2.2]{HelVin07}
that every spectrahedron in $\R^2$ with nonempty interior can be described using $d \times d$ matrices where $d$ is the degree of the 
algebraic boundary of $C$.
There are several immediate ways to see that the analogous statement in general fails for $C \subseteq \R^n$ when $n\geq 3$.

Let $P$ be a full-dimensional polytope given by $d$ linear inequalities. Then $P$ has a description as a spectrahedron using $d \times d$ matrices.
 There has been recent interest in finding the smallest number $r$ such that $P$ is the \textit{projection} of a spectrahedron
 given by $r \times r$ matrices, see e.g. \cite{fawzi2014positive, MR3062007, gouv13, lee2014some}. This number is the \textit{positive semidefinite rank} of the \textit{slack matrix}
 of $P$. Good lower bounds are known for example for the cut, TSP, and stable set polytopes \cite{lee2014lower}.

 In the general case there are almost no results regarding Question \ref{qut:haupt}. Already for the unit ball in $\R^n$ the answer is not known.

In this paper, we address this question for compact convex sets in $\R^n$ defined by a quadratic polynomial and for convex sets in $\R^3$
defined by a cubic polynomial.
In section \ref{sec:quad} we consider the unit ball in $\R^n$
which is known to have a description as a spectrahedron using
$n \times n$ matrices. We will show that one can not do better than matrices of size $\frac{n}{2}$.
More precisely, we will show for any natural number $k$
that whenever $n>2^k$ we will need matrices of size larger than $2^k$.
In particular, for $n=2^k+1$ the description using $n \times n$ matrices is the smallest possible description.
The proof relies on well-known results about quadratic forms. 

In section \ref{sec:cub} we consider  convex regions in $\R^3$ 
whose algebraic boundary is defined by a cubic polynomial.
Such a set is a spectrahedron and we show that in general it is impossible to find a description with matrices of size smaller than five.
More precisely, we show that if $A_1,A_2,A_3 \in \Sym_r(\R)$ are real symmetric matrices of size $r$ such that
 \[f(x,y,z)=\det(I_r+A_1 x+A_2 y+A_3 z)\] is a cubic polynomial, then 
 the surface in complex
 projective three-space with affine equation $f(x,y,z)=0$ is singular. The proof crucially uses the characterization of vector spaces of matrices of rank at most
 three \cite{Atki,EH}.

\section{Quadratic polynomials} \label{sec:quad}
The goal of this section is to give lower bounds for the size of matrices in spectrahedral representations of convex sets
defined by a quadratic polynomial. The following theorem is the main result of this section.
\begin{Thm} \label{thm:quad}
 Let $f \in \R[x_1, \ldots, x_n]$ be a quadratic polynomial such that 
 \[C=\{p \in \R^n: \,\, f(p) \geq 0\}\] is a compact, convex set with nonempty interior.
 Consider a spectrahedral representation 
 \[C=\{p\in \R^{n}: \,\,A(p)= A_0 +A_1  p_1 \ldots + A_n p_n \succeq 0\} \]
where $A_0,\ldots , A_n \in \Sym_r(\R)$ are real symmetric matrices of size $r$.
If $n > 2^k$ for some non-negative integer $k$, then $r > 2^k$.

In particular, we have $r \geq \frac{n}{2}$ and if $n=2^k+1$ we even have $r \geq n$.
\end{Thm}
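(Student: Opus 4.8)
The plan is to reduce the statement to a fact about quadratic forms. Since $C$ is compact with nonempty interior and $f$ is quadratic, after an affine change of coordinates the boundary quadric $f=0$ is the sphere; that is, $f$ is, up to an invertible affine transformation and a positive scalar, the polynomial $1-\sum x_i^2$, and its homogenization is (equivalent to) the quadratic form $q$ of signature $(1,n+1)$ or $(n+1,1)$ on $\R^{n+2}$ (one extra variable for homogenization, using that the affine quadric is a nondegenerate sphere). The polynomial $g(p)=\det A(p)$ vanishes on the algebraic boundary of $C$, which is the sphere, an irreducible hypersurface of degree $2$; hence $g$ is divisible by $f$, and comparing degrees $g = f^{m} \cdot (\text{something})$, with the upshot that $f$ itself divides $\det A(p)$ and $\deg_{\text{necessary}}$ considerations give that $r \ge 2$ always and, more importantly, that the quadratic form $f$ (homogenized) is a \emph{factor} of a determinant of a symmetric linear matrix pencil of size $r$.

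The key step is then: if a nondegenerate quadratic form $q$ in $N = n+1$ (projective) variables divides $\det(A_0 + \sum A_i x_i)$ for symmetric $r\times r$ matrices, then $q$ is represented by the restriction of the generic symmetric determinant to the singular locus in an appropriate sense, and — this is the real input — $q$ must then be a \emph{subform} of (or Witt-equivalent to a form dominated by) a hyperbolic-type form of rank controlled by $r$. Concretely, I would use the standard correspondence between size-$r$ symmetric determinantal representations of a quadric and isotropic subspaces / Clifford-algebra modules: a quadratic form admitting such a representation of size $r$ must satisfy a bound relating $r$ to the dimension of a representation of its even Clifford algebra. For the sphere, the even Clifford algebra of the relevant form is a matrix algebra (or a product of two) over $\R$, $\C$, or $\mathbb H$ of dimension $2^{n}$ or so, and its smallest nonzero module has dimension governed by the Radon–Hurwitz-type function; the precise numerics force $r > 2^k$ as soon as $n > 2^k$. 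This is exactly the "well-known results about quadratic forms" the introduction promises.

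In more detail, the chain of reasoning I expect to carry out is: (i) normalize $f$ to $1 - \sum_{i=1}^n x_i^2$; (ii) show $\det A(p)$ is a power of $f$ times a nonvanishing constant (irreducibility of the sphere plus the fact that $A(p_0)\succ 0$ at an interior point $p_0$ forces the constant to be positive and no other real factors); (iii) homogenize and invoke that a symmetric determinantal representation of $f^s$ of size $r$ yields, by the theory of determinantal representations of quadrics over $\R$ (Clifford modules), the inequality $r \ge s\cdot \rho(n)$ where $\rho$ is the relevant Radon–Hurwitz function; (iv) check $\rho(n) > 2^k$ whenever $n > 2^k$, i.e. $\rho$ grows like "next power of two past $n/2$", yielding the stated bound and, for $n = 2^k+1$, the value $\rho(n) = n$. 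The main obstacle is step (iii): making the Clifford-algebra bookkeeping precise over $\R$ (as opposed to over $\C$, where the count is cleaner), since the real representation theory splits into cases according to $n \bmod 8$; one must be careful that the \emph{minimal} size of a real symmetric determinantal representation of the anisotropic-part form is exactly the real Radon–Hurwitz number and not merely bounded below by the complex one. Once that is pinned down, the inequalities $r > 2^k$ for $n > 2^k$ and $r \ge n$ for $n = 2^k+1$ are immediate arithmetic.
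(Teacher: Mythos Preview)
Your step (ii) fails: $\det A(p)$ need not be a constant times a power of $f$. Take
\[
A(p)=\begin{pmatrix} 1+p_1 & p_2 & 0\\ p_2 & 1-p_1 & 0\\ 0 & 0 & 2-p_1\end{pmatrix},
\]
which represents the unit disk in $\R^2$, yet $\det A(p)=(1-p_1^2-p_2^2)(2-p_1)$. In general $\det A(p)=f^m q$ with $q$ coprime to $f$, and $q$ can be any polynomial that is positive on the ball but vanishes somewhere outside it; your positivity argument only excludes factors vanishing at interior points, not factors vanishing outside $C$. This breaks step (iii), since the Clifford-module correspondence you invoke presupposes a determinantal representation of $f^s$ itself, and there is no evident way to strip off $q$ without changing $r$. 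Independently, the ``Radon--Hurwitz-type function'' you posit, with $\rho(n)>2^k$ whenever $n>2^k$ and $\rho(2^k+1)=2^k+1$, is not any standard such function, so even in the pure-power case step (iii) remains a sketch whose numerics are unsubstantiated.

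The paper's proof takes a different route that accommodates the extra factor $q$ from the outset. Writing $M=I_r x_0-\sum_i A_ix_i$ and $h=x_0^2-\sum_i x_i^2$, one first controls the $h$-adic order of the entries of $\adj(M)$ via two lemmas on minors, and then extracts a column $w\equiv a\,x_0+b\pmod h$ with $a,b\in\R[x_1,\ldots,x_n]^r$ not both zero. The relation $Mw\equiv 0\pmod h$ unwinds to $Aa=b$ and $Ab=(\sum_i x_i^2)\,a$ for $A=\sum_i A_ix_i$, and symmetry of $A$ gives $b^{\mathrm T}b=(\sum_i x_i^2)\,a^{\mathrm T}a$. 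Hence $\sum_{i=1}^n x_i^2$ is a quotient of two nonzero sums of $r$ squares in $\R(x_1,\ldots,x_n)$. If $r\le 2^k$, Pfister's theorem (nonzero sums of $2^k$ squares form a multiplicative group) would make $\sum_{i=1}^n x_i^2$ itself a sum of $2^k$ squares of rational functions, contradicting $n>2^k$. So the quadratic-form input is Pfister and Cassels--Pfister, not Clifford modules or Radon--Hurwitz.
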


Let $f \in \R[x_1, \ldots, x_n]$ be a quadratic polynomial such that 
 $C=\{p \in \R^n: \,\, f(p) \geq 0\}$ is a compact, convex set with nonempty interior.
After some affine change of coordinates we have that $f=1-(x_1^2+\ldots+x_n^2)$ and $C$ is the unit ball.
Thus we can and will restrict our attention to the case of the unit ball.
There is the following well-known description of the unit ball in $\R^n$ as a spectrahedron using $n \times n$ matrices:
 \[ \begin{pmatrix}
     1+p_1 & p_2 & p_3 &  \ldots & p_n \\
     p_2 & 1-p_1 & 0 & \ldots & 0 \\
     p_3 & 0 & 1-p_1 & \ddots & \vdots \\
     \vdots &\vdots &\ddots & \ddots & 0 \\ 
     p_n &0&\ldots&0& 1-p_1
    \end{pmatrix} \succeq 0.
 \]
Thus when $n=2^k+1$ the lower bound from Theorem \ref{thm:quad} is sharp.

For the proof we need some lemmas concerning minors of a matrix polynomial.

\begin{Lemma}\label{lem:minor1}
 Let $A_1, \ldots, A_n \in \Sym_r(\R)$ and consider $M=I_r x_0 -(A_1 x_1+ \ldots +A_n x_n)$.
 Let $h \in \R[x_0,\ldots,x_n]$ be an irreducible polynomial whose set of real zeros  lies Zariski dense in its complex zero set.
 Let $m \geq 1$.
 If the polynomial $\det(M)$ is divisible by $h^m$,
 then every $(r-m+1) \times (r-m+1)$ minor
 of $M$ is divisible by $h$.
\end{Lemma}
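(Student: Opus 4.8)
The plan is to reduce everything to a pointwise corank bound: \emph{at every real point $p$ of the zero set $V:=\{h=0\}\subseteq\C^{n+1}$, the symmetric matrix $M(p)$ has corank at least $m$}. Granting this, fix any choice of $r-m+1$ rows and $r-m+1$ columns of $M$ and let $N$ be the resulting submatrix, so $\det N\in\R[x_0,\ldots,x_n]$. If $M(p)$ has corank $\geq m$, then $\rank M(p)\leq r-m$, so every $(r-m+1)\times(r-m+1)$ minor of $M(p)$ vanishes; in particular $\det N(p)=0$. Hence $\det N$ vanishes at every real point of $V$, and by hypothesis these are Zariski dense in $V$, so $\det N$ vanishes on all of $V$. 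Since $h$ is irreducible over $\R$ it is squarefree (even over $\C$), so a polynomial vanishing on $V$ is divisible by $h$; therefore $h\mid\det N$, as claimed.

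The heart of the proof is the corank bound, and here the special shape $M=I_rx_0-(A_1x_1+\cdots+A_nx_n)$ is essential. Fix a real point $p=(p_0,\ldots,p_n)$ with $h(p)=0$ and restrict $M$ to the line $t\mapsto(p_0+t,p_1,\ldots,p_n)$. Because the coefficient of $x_0$ is $I_r$, along this line $M$ becomes $M(p)+tI_r$, so $g(t):=\det\bigl(M(p)+tI_r\bigr)$ is a monic polynomial of degree $r$ in $t$ whose roots are the negatives of the eigenvalues of the real symmetric matrix $M(p)$; consequently the multiplicity of $t=0$ as a root of $g$ equals $\dim\ker M(p)$, the corank of $M(p)$. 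On the other hand, write $\det M=h^m\cdot q$ with $q\in\R[x_0,\ldots,x_n]$ and substitute the line: $g(t)=\tilde h(t)^m\,\tilde q(t)$, where $\tilde h(t)=h(p_0+t,p_1,\ldots,p_n)$ and $\tilde q$ is the corresponding specialization of $q$. Since $g$ is monic of degree $r$ it is not the zero polynomial, so $\tilde h\not\equiv 0$, while $\tilde h(0)=h(p)=0$; hence $\tilde h$ vanishes at $t=0$ to some order $\geq 1$, so $\tilde h^m$ vanishes there to order $\geq m$, and therefore so does $g=\tilde h^m\tilde q$. Comparing the two descriptions of this multiplicity gives corank $M(p)\geq m$.

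I expect the only genuinely delicate point to be the passage from ``$\det N$ vanishes at every real zero of $h$'' to ``$h\mid\det N$'': this is exactly where the hypothesis that the real zeros of $h$ are Zariski dense in the complex zero set is used, and it cannot be dropped (for instance $h=x_1^2+x_2^2$ is $\R$-irreducible but its complex zero set is not controlled by its real points). By contrast, the corank computation requires no genericity or smoothness of $V$, since it holds at \emph{every} real zero of $h$; and the observation that $g$ is monic of degree $r$ conveniently excludes the degenerate possibility that the chosen line lies inside $V$.
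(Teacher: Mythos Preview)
Your proof is correct and follows essentially the same approach as the paper: reduce to showing that $M(p)$ has corank at least $m$ at every real zero $p$ of $h$, then use Zariski density of the real zeros together with the Nullstellensatz to conclude divisibility by $h$. The corank bound is obtained in both cases by restricting to a line in the $x_0$-direction, so that $\det M$ specializes to the characteristic polynomial of a real symmetric matrix, and then using that $h^m\mid\det M$ forces a root of multiplicity at least $m$ at the relevant point; your write-up is in fact a bit more careful than the paper's in noting that the monicity of $g$ guarantees $\tilde h\not\equiv 0$.
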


\begin{proof}
 By Hilbert's Nullstellensatz and since $h$ is irreducible with Zariski dense real zeros it suffices to show that 
 every $(r-m+1) \times (r-m+1)$ minor of $M$ vanishes on the real zero set of $h$.  
  Thus let $a=(a_0,\ldots,a_n) \in \R^{n+1}$ such that $h(a)=0$.
 Consider the univariate polynomial $p=h(t,a_1 \ldots, a_{n}) \in \R[t]$.
 Since $p^m$ divides the characteristic polynomial of the matrix $A(a)=a_1 A_1+\ldots+a_{n} A_n$
 and since $p$ has a root at $t=a_0$ the kernel of $M(a)=a_0 I_r-A(a)$ is at least $m$-dimensional.
 Thus every $(r-m+1) \times (r-m+1)$ minor of $M(a)$ vanishes. 
\end{proof}

\begin{Lemma}\label{lem:minor2}
 Let $A_1, \ldots, A_n \in \Mat_{r}(\R)$ be square matrices of size $r$
 and consider \[M=I_r x_0 -(A_1 x_1+ \ldots +A_n x_n).\]
 Let $h \in \R[x_0,\ldots,x_n]$ be an irreducible polynomial whose set of real zeros  lies Zariski dense in its complex zero set.
 Let $m \geq 1$.
 If the polynomial $\det(M)$ is not divisible by $h^m$,
 then there is an $(r-m+1) \times (r-m+1)$ minor which is not divisible by $h$.
 
 If furthermore the matrices $A_1, \ldots, A_n$ are symmetric, then there is a symmetric $s \times s$ minor of $M$ which is not divisible by $h$ with $s \geq r-m+1$.
\end{Lemma}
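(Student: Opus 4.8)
The plan is to prove the first statement by a contrapositive/counting argument parallel to Lemma \ref{lem:minor1}, and then to upgrade it to the symmetric case using a symmetry argument on the rank of $M$ over the function field, or more concretely over the real points of $h$. First I would establish the first claim: suppose, for contradiction, that every $(r-m+1)\times(r-m+1)$ minor of $M$ is divisible by $h$. By the same Nullstellensatz reasoning as before (using that $h$ is irreducible with Zariski-dense real zero set), this forces every such minor to vanish identically on the real zero set $Z_\R(h)$, hence the matrix $M(a)$ has rank at most $r-m$ for every $a\in Z_\R(h)$, i.e. $\dim\ker M(a)\ge m$. For a point $a=(a_0,\dots,a_n)\in Z_\R(h)$ with $a_0$ the indeterminate restored, the characteristic polynomial of $A(a)=a_1A_1+\dots+a_nA_n$ then has $t=a_0$ as a root of multiplicity at least $m$; since this holds for all real points, and $h$ has Zariski-dense real zeros, the polynomial $h^m$ divides $\det(M)=\prod$ (char.\ poly.\ factors) — contradicting the hypothesis. (One must be slightly careful that $A_i$ are now merely in $\Mat_r(\R)$, not symmetric, so "rank at most $r-m$" no longer immediately gives a clean eigenvalue-multiplicity statement; the right way is to argue directly that the $\gcd$ of all size-$(r-m+1)$ minors of $M$, which divides $\det(M)$, is divisible by $h$, and to combine this with the identity expressing higher minors in terms of lower ones — i.e. if all $(r-m+1)$-minors are divisible by $h$ then so is $\det M$, which is an $r\times r$ minor, and in fact $h^{m}\mid \det M$ by a determinantal-ideal / Fitting-ideal argument.)

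The cleanest formulation: let $\delta_j$ denote the gcd of all $j\times j$ minors of $M$. The Fitting-ideal chain gives $\delta_1\mid\delta_2\mid\cdots\mid\delta_r=\det M$, and moreover $\delta_{j}\mid\delta_{j+1}$ with the quotient accounting for the drop; one shows $h^{m}\mid\delta_r$ whenever $h\mid\delta_{r-m+1}$, essentially because passing from $(r-m+1)$-minors up to $r$-minors multiplies through $m$ successive layers each contributing a factor of $h$ along $Z_\R(h)$. Granting this, $h\mid\delta_{r-m+1}$ would force $h^{m}\mid\det M$, contrary to hypothesis; hence some $(r-m+1)\times(r-m+1)$ minor is not divisible by $h$, proving the first assertion.

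For the symmetric refinement, now assume $A_1,\dots,A_n\in\Sym_r(\R)$, so $M=M^{\mathsf t}$. From the first part choose a (possibly non-symmetric) $(r-m+1)\times(r-m+1)$ submatrix, with row index set $R$ and column index set $C$, whose minor is not divisible by $h$; put $s_0=r-m+1$. The key linear-algebra input is: if $M$ is a symmetric $r\times r$ matrix (over a field, or a domain) possessing \emph{some} $s_0\times s_0$ submatrix of full rank, then it possesses a \emph{symmetric} (principal) $s\times s$ submatrix of full rank with $s\ge s_0$ — indeed, over a field, a symmetric matrix of rank $\rho$ always has a nonzero $\rho\times\rho$ principal minor (this is the standard fact that the rank of a symmetric bilinear form is attained on a coordinate subspace), and here the existence of a nonvanishing $s_0$-minor shows the rank is $\ge s_0$. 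Applying this over the fraction field $\R(x_0,\dots,x_n)$: the rank of $M$ over this field is some $\rho\ge s_0$, hence there is a principal $\rho\times\rho$ submatrix with nonzero determinant; that determinant is a nonzero polynomial, but a priori could be divisible by $h$. The main obstacle is precisely this: ensuring the symmetric minor one extracts is not divisible by $h$. I would handle it by working "at $h$" instead of generically: let $K=\R(Z_\R(h))$ be the function field of the hypersurface (or just localize $\R[x_0,\dots,x_n]$ at the prime $(h)$ and pass to the residue field $\kappa(h)$). The hypothesis $h^{m}\nmid\det M$ means $\det M$, viewed in the DVR $\R[x]_{(h)}$, has $h$-adic valuation $\le m-1$; equivalently, by the Fitting-ideal description of rank over a DVR, the matrix $\bar M$ over the residue field $\kappa(h)$ has rank $\ge r-(m-1)=r-m+1=s_0$. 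Since $\bar M$ is still symmetric over $\kappa(h)$, it has a nonzero principal minor of size $s:=\rank_{\kappa(h)}\bar M\ge s_0$; lifting the corresponding principal $s\times s$ submatrix of $M$ back to $\R[x]$, its determinant reduces to something nonzero mod $h$, i.e. is not divisible by $h$, and has size $s\ge r-m+1$. This is the desired symmetric minor, completing the proof.
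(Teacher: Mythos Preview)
Your proof is correct and takes a genuinely different, more algebraic route than the paper. The paper argues pointwise: writing $\det M=h^{d}q$ with $d<m$ and $\gcd(h,q)=1$, it picks a real point $a$ with $h(a)=0$ and $q(a)\neq 0$, observes that the kernel of $M(a)=a_0 I_r-A(a)$ has dimension at most $d$ (the algebraic multiplicity of $a_0$ in the characteristic polynomial $g_1^{d}g_2$ of $A(a)$), and concludes that some $(r-m+1)\times(r-m+1)$ minor --- and, in the symmetric case, some principal minor of size at least $r-m+1$ --- is nonzero at $a$, hence not divisible by $h$. You instead localize at the prime $(h)$, use the Smith normal form over the DVR to get $\rank_{\kappa(h)}\bar M\ge r-d\ge r-m+1$, and then apply the fact (valid over any field) that a symmetric matrix of rank $\rho$ possesses a nonvanishing principal $\rho\times\rho$ minor. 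Lifting that principal minor back to $\R[x_0,\dots,x_n]$ gives the desired symmetric minor not divisible by $h$.

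What each approach buys: your argument is cleaner in that it never uses the hypothesis that the real zeros of $h$ are Zariski dense --- the lemma in fact holds for any irreducible $h$ over any ground field. The paper's approach is more elementary (no localization or Smith form, just evaluation at a well-chosen real point), though as written it tacitly needs $a_0$ to be a \emph{simple} zero of $g_1(t)=h(t,a_1,\dots,a_n)$, which one arranges by also demanding $\partial_{x_0}h(a)\neq 0$.

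One small remark on your exposition: in the first paragraph you worry that for non-symmetric $A_i$ the condition ``rank at most $r-m$'' no longer yields a clean eigenvalue-multiplicity statement. It does --- geometric multiplicity is bounded by algebraic multiplicity for any square matrix --- so your initial contrapositive was closer to working than you suggest. The genuine subtlety in that route is the converse implication (vanishing to order $\ge m$ in the $x_0$-direction at every real zero of $h$ forces $h^{m}\mid\det M$), and your switch to the Smith/Fitting argument over the DVR is exactly the right way to sidestep it.
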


\begin{proof}
 Let $\det(M)=h^d q$ where $q$ is a polynomial which is coprime to $q$ and $d<m$.
 We observe that there is an $a=(a_0,\ldots,a_n) \in \R^{n+1}$ such that $h(a)=0$ but $q(a) \neq 0$, because otherwise
 $q$ would be divisible by $h$ according to Hilbert's Nullstellensatz. Letting \[g_1=h(t,a_1, \ldots, a_n) \textrm{ and } g_2=q(t,a_1, \ldots, a_n),\]
 the characteristic polynomial of the matrix
 $A(a)=a_1 A_1+\ldots+a_{n} A_n$
 is $g_1^d g_2$.
 Since $g_1$ has a zero at $t=a_0$ but $g_2$ has not the kernel of $M(a)=a_0 I_r-A(a)$ is at most $d$-dimensional.
 This means that $M(a)$ has rank bigger than $r-m$.
 Thus
 there is an $(r-m+1) \times (r-m+1)$ minor of $M(a)$ which does not vanish. If the $A_i$ are symmetric, then there
 is a symmetric $s \times s$ minor of $M(a)$ which does not vanish with $s \geq r-m+1$. This shows the claim.
\end{proof}

\begin{Prop}\label{prop:adjteil}
 Let $A_1, \ldots, A_n \in \Sym_r(\R)$ and consider $M=I_r x_0 -(A_1 x_1+ \ldots +A_n x_n)$.
 Let $h \in \R[x_0,\ldots,x_n]$ be an irreducible polynomial whose set of real zeros  lies Zariski dense in its complex zero set.
 Assume that $\det(M)$ is divisible by $h^m$ but not by $h^{m+1}$ for some $m \geq 1$.
 Then the greatest common divisor
 of all entries of $\adj(M)$ and $h^m$ is precisely
 $h^{m-1}$.
\end{Prop}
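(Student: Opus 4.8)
The plan is to study $M$ at the generic point of the irreducible hypersurface $\{h=0\}$ and to read the divisibility of the entries of $\adj(M)$ off from the elementary divisors of $M$ there. Since $h$ is irreducible, $(h)$ is a prime ideal of $R=\R[x_0,\ldots,x_n]$, and the localization $\sO=R_{(h)}$ is a discrete valuation ring with uniformizer $h$; write $v$ for the associated valuation, so that $v(g)$ is the multiplicity with which $h$ divides $g\in R$. Over the principal ideal domain $\sO$ the matrix $M$ has a Smith normal form $M=UDV$ with $U,V\in\GL_r(\sO)$ and $D$ diagonal with diagonal entries $h^{e_1},\ldots,h^{e_r}$ for integers $0\le e_1\le\cdots\le e_r$. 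Since the ideal generated by the $k\times k$ minors of a matrix over a principal ideal domain is generated by the product of its first $k$ invariant factors, the greatest common divisor of the $k\times k$ minors of $M$ equals $h^{e_1+\cdots+e_k}$ up to a factor coprime to $h$. As $v(\det(M))=m$ we have $e_1+\cdots+e_r=m$, and as the entries of $\adj(M)$ are, up to sign, exactly the $(r-1)\times(r-1)$ minors of $M$, their greatest common divisor is $h^{m-e_r}$ up to a factor coprime to $h$. It therefore suffices to show that $e_r=1$.

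Since $\sum_i e_i=m$ and $m\ge1$, proving $e_r=1$ amounts to showing that exactly $m$ of the exponents $e_i$ are positive, because then each positive one must equal $1$. Reducing the Smith normal form modulo $h$ shows that the rank of $\ol M$, the reduction of $M$ modulo $h$ regarded as a matrix over the residue field of $\sO$ (the function field of $\{h=0\}$), equals the number of indices $i$ with $e_i=0$; so we must show $\rank(\ol M)=r-m$. Here a minor of $M$ vanishes in the residue field precisely when it is divisible by $h$ in $R$. By Lemma \ref{lem:minor1}, applied with the given $m$, every $(r-m+1)\times(r-m+1)$ minor of $M$ is divisible by $h$, so $\rank(\ol M)\le r-m$. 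By Lemma \ref{lem:minor2}, applied with $m+1$ in place of $m$ and using that $h^{m+1}$ does not divide $\det(M)$, some $(r-m)\times(r-m)$ minor of $M$ is not divisible by $h$, so $\rank(\ol M)\ge r-m$.

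Combining the two steps gives $e_r=1$, hence the greatest common divisor of all entries of $\adj(M)$ is divisible by $h^{m-1}$ but not by $h^m$. Any common divisor of $h^m$ and a polynomial divides $h^m$ and is therefore a power of $h$ up to a nonzero constant; consequently the greatest common divisor of $h^m$ together with all entries of $\adj(M)$ is exactly $h^{m-1}$, as claimed.

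I expect the only delicate point to be the equality $\rank(\ol M)=r-m$. Its upper bound invokes Lemma \ref{lem:minor1}, and this is precisely where symmetry of the $A_i$ enters: for a non-symmetric pencil $M$ one can have $h^m\mid\det(M)$ while the generic corank of $M$ along $\{h=0\}$ is only $1$, so the conclusion would fail. The lower bound uses the full strength of the hypothesis $h^{m+1}\nmid\det(M)$. Everything else is the bookkeeping of Smith normal form over the discrete valuation ring $R_{(h)}$ together with the standard description of determinantal ideals.
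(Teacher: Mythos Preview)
Your proof is correct. Both your argument and the paper's hinge on Lemmas \ref{lem:minor1} and \ref{lem:minor2}, but you organize the computation differently. You localize at $(h)$, put $M$ into Smith normal form over the discrete valuation ring $R_{(h)}$, and reduce the statement to showing that the largest elementary divisor exponent $e_r$ equals $1$; this you read off from $\rank(\ol M)=r-m$, which follows immediately from the two lemmas (applied to $M$ itself, with parameters $m$ and $m+1$ respectively). The paper instead argues directly with minors: it first uses Lemma \ref{lem:minor1} to see that all $(r-m+1)\times(r-m+1)$ minors of $M$ are divisible by $h$, and then applies the contrapositive of Lemma \ref{lem:minor2} to each $(r-1)\times(r-1)$ submatrix $M'$ to conclude $h^{m-1}\mid\det(M')$; for the other direction it invokes the symmetric refinement in Lemma \ref{lem:minor2} to find a large symmetric minor coprime to $h$, embeds it in a principal $(r-1)\times(r-1)$ submatrix $M''$, and appeals to Lemma \ref{lem:minor1} again to see that $h^m\nmid\det(M'')$. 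Your approach is more structural and handles all $(r-1)\times(r-1)$ minors uniformly; in particular you never need the symmetric clause of Lemma \ref{lem:minor2}, nor do you need to apply either lemma to proper submatrices of $M$.
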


\begin{proof}
 By Lemma \ref{lem:minor1} every $(r-m+1) \times (r-m+1)$ minor
 of $M$ is divisible by $h$.
 Thus, if we have an $(r-1) \times (r-1)$ submatrix $M'$ of $M$, then every 
 minor of $M'$ of size $((r-1)-(m-1)+1)$ is divisible by $h$. 
 Now it follows from Lemma \ref{lem:minor2} that $\det(M')$ is divisible by $h^{m-1}$.
 Thus every entry of $\adj(M)$ is divisible by $h^{m-1}$.
 
 On the other hand, Lemma \ref{lem:minor2} sais that there is a symmetric
 $s \times s$ submatrix $M'$ of $M$ whose determinant is not divisible by $h$ with $r-m \leq s < r$.
 This implies in particular that there is a $(r-m) \times (r-m)$ minor of $M'$ that is not divisible by $h$. 
 This is then also an
 $(r-m) \times (r-m)$ minor of a suitable symmetric $(r-1) \times (r-1)$ submatrix  $M''$ of $M$.
  Then
 Lemma \ref{lem:minor1} implies that $\det(M'')$ is not divisible by $h^m$.
\end{proof}

Now we can relate the existence of a spectrahedral representation for the unit ball of a given size to a statement about sum of squares
representations of certain polynomials of a given length.

\begin{Prop}\label{prop:sos}
 Let $C=\{p \in \R^n: \,\, \| p \|_2 \leq 1\}$ be the unit ball. Assume that we have
  \[C=\{p\in \R^{n}: \,\,A(p)= A_0 -A_1  p_1 \ldots - A_n p_n \succeq 0\} \]
where $A_0,\ldots , A_n \in \Sym_r(\R)$ are real symmetric matrices of size $r$.
Then we can find two nonzero polynomials $g_1,g_2 \in \R[x_1, \ldots, x_n]$ that are both a sum of $r$ squares of polynomials
with $(x_1^2+\ldots+x_n^2) \cdot g_1 = g_2$.
\end{Prop}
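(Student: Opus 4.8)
The plan is to reduce to the unit ball with $A_0=I_r$ and then to extract, from the adjugate of $A(p)$, a nonzero polynomial vector $v\in\R[x_1,\dots,x_n]^r$ satisfying the eigenvector identity $L(x)^{2}v=(x_1^2+\dots+x_n^2)\,v$, where $L(x)=A_1x_1+\dots+A_nx_n$. Once this is available the two required sums of $r$ squares are simply $g_1=\|v\|^{2}=v_1^2+\dots+v_r^2$ and $g_2=\|L(x)v\|^{2}=(L(x)v)_1^2+\dots+(L(x)v)_r^2$: since $L(x)$ is symmetric, $\|L(x)v\|^{2}=v^{T}L(x)^{2}v=(x_1^2+\dots+x_n^2)\,\|v\|^{2}$, so $(x_1^2+\dots+x_n^2)g_1=g_2$, and $g_1\neq 0$ because $v\neq 0$.

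First the reductions. An affine change of coordinates turns $C$ into the unit ball and the given linear matrix inequality into one for the unit ball. If $A_0$ is singular, any vector in $\ker A_0$ lies in $\ker A_i$ for every $i$ (test $A(p)\succeq 0$ at $\pm\epsilon e_i$ for small $\epsilon$), hence in the common kernel of all the matrices $A(p)$; discarding it strictly decreases $r$, so we may assume $A_0\succ 0$, and after the congruence $A_i\mapsto A_0^{-1/2}A_iA_0^{-1/2}$ we may assume $A_0=I_r$, i.e.\ $A(p)=I_r-L(p)$. Put $h=1-(x_1^2+\dots+x_n^2)$. Every point $\omega$ of the unit sphere lies on the boundary of $C$, so $A(\omega)$ is positive semidefinite but not positive definite, hence singular; thus $\det A(p)$ vanishes on the sphere, and since $h$ is irreducible with Zariski dense real zero set (we may assume $n\ge 2$, the case $n=1$ being trivial) we get $h^{m}\mid\det A(p)$ for a maximal $m\ge 1$. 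Writing $\det A(p)=h^{m}\tilde q$ with $h\nmid\tilde q$ and applying Proposition~\ref{prop:adjteil} to the homogenization $M=I_rx_0-\sum_i A_ix_i$, then dehomogenizing, we obtain $\adj A(p)=h^{m-1}\tilde B$ with $\tilde B\in\Mat_r(\R[x_1,\dots,x_n])$ symmetric; from $A(p)\cdot\adj A(p)=\det A(p)\cdot I_r$ this gives $A(p)\tilde B=h\tilde q\,I_r$, and taking determinants yields $\det\tilde B=h^{\,r-m}\tilde q^{\,r-1}$, a nonzero polynomial (since $r\ge 2$ and $2m\le r$).

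Now comes the key point. Because $A(\omega)=I_r-L(\omega)$ is singular for every $\omega$ on the unit sphere, $1$ is an eigenvalue of $L(\omega)$; scaling along rays, $x_1^2+\dots+x_n^2$ is an eigenvalue of $L(x)^{2}$ at every real point, so the symmetric matrix $\Pi:=(x_1^2+\dots+x_n^2)I_r-L(x)^{2}$ is singular on a Zariski dense set and $\det\Pi\equiv 0$. On the other hand, using $A(p)=I_r-L$, $A(p)\tilde B=h\tilde q I_r$ and $1-h=x_1^2+\dots+x_n^2$, one computes
\[
A(p)\bigl(\tilde B-\tilde q\,(I_r+L)\bigr)=h\tilde q\,I_r-\tilde q\,(I_r-L^{2})=-\tilde q\,\Pi .
\]
Taking determinants, $\det A(p)\cdot\det\bigl(\tilde B-\tilde q(I_r+L)\bigr)=(-\tilde q)^{r}\det\Pi=0$, and since $\det A(p)=h^{m}\tilde q\neq 0$ the matrix $\tilde B-\tilde q(I_r+L)$ is singular over $\R(x_1,\dots,x_n)$. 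Choose $0\neq w\in\R[x_1,\dots,x_n]^r$ with $\tilde B w=\tilde q\,(I_r+L)w$ and set $v:=\tilde B w$; then $v\neq 0$ because $\det\tilde B\neq 0$. From $A(p)\tilde B=h\tilde q I_r$ one gets $L(x)^{2}\tilde B=\tilde B-h\tilde q(I_r+L)$, hence
\[
L(x)^{2}v=\tilde B w-h\tilde q(I_r+L)w=v-h\bigl(\tilde q(I_r+L)w\bigr)=v-hv=(x_1^2+\dots+x_n^2)\,v ,
\]
which is exactly the identity from the first paragraph, and the proof concludes with $g_1,g_2$ as defined there.

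The one step that needs an idea is combining the algebraic identity $A(p)\bigl(\tilde B-\tilde q(I_r+L)\bigr)=-\tilde q\,\Pi$ with the observation $\det\Pi\equiv 0$: this is what forces $\tilde B-\tilde q(I_r+L)$ to drop rank and thereby produces the eigenvector $v$ of $L(x)^{2}$ for the eigenvalue $x_1^2+\dots+x_n^2$; once $v$ is in hand, the two sum‑of‑$r$‑squares representations are immediate.
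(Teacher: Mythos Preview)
Your argument is correct. Both you and the paper end up producing a nonzero polynomial vector $v$ satisfying $L(x)^2v=(x_1^2+\dots+x_n^2)v$ (with $L=A_1x_1+\dots+A_nx_n$), and then take $g_1=v^Tv$, $g_2=(Lv)^T(Lv)$. The difference is in how that eigenvector is found. The paper works homogeneously: it takes a column of $h^{1-m}\adj M$ whose entries are not all divisible by $h$, reduces it modulo $h=x_0^2-\sum x_i^2$, writes the result as $a\,x_0+b$ with $a,b\in\R[x_1,\dots,x_n]^r$, and reads off $La=b$, $Lb=(\sum x_i^2)a$ directly by comparing the $x_0$-constant and $x_0$-linear parts of the congruence $Mw\equiv 0\bmod h$. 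Here the ``hard'' direction of Proposition~\ref{prop:adjteil} (some entry of $\adj M$ is not divisible by $h^m$) is essential to guarantee $(a,b)\neq(0,0)$. You instead stay affine, observe that $\det\bigl((\sum x_i^2)I_r-L^2\bigr)\equiv 0$ because every unit vector gives an eigenvalue $1$ of $L$, combine this with the identity $A(p)\bigl(\tilde B-\tilde q(I_r+L)\bigr)=-\tilde q\bigl((\sum x_i^2)I_r-L^2\bigr)$ to force $\tilde B-\tilde q(I_r+L)$ to be singular, and then get $v=\tilde Bw\neq 0$ from the nonvanishing of $\det\tilde B=h^{r-m}\tilde q^{\,r-1}$. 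This last step means you only need the easy direction of Proposition~\ref{prop:adjteil} ($h^{m-1}$ divides $\adj A(p)$), which is a small economy; on the other hand the paper's approach is shorter and avoids the auxiliary matrix $\Pi$ and the extra determinant computation.
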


\begin{proof}
 Without loss of generality we can assume that $A_0=I_r$ is the identity matrix. Then consider $M=I_r x_0 -(A_1 x_1+ \ldots +A_n x_n)$.
 After homogenizing we have that $\det(M)$ is divisible by the polynomial $h=x_0^2-(x_1^2+\ldots+x_n^2)$
 since the $A_i$ give a spectrahedral description of the unit ball.
 Let $\det(M)=q \cdot h^m$ where $m \geq 1$ and $q$ is a polynomial that is coprime to $h$.
 Every entry of $\adj(M)$ is divisible by $h^{m-1}$ and there is at least one entry that is not divisible by $h^m$.
 Let $v$ be a column of $\adj(M)$ that has such an entry. The vector $w=h^{1-m} \cdot v$ has polynomials as entries and
 not every entry of $w$ is divisible by $h$. We have that
 \[
  M \cdot w = h^{1-m} \cdot M \cdot v = h^{1-m} \cdot \det(M) \cdot e = q \cdot h \cdot e
 \]
 where $e \in \R^r$ is a unit vector. Modulo $h$ we can write $w=a \cdot x_0 + b$ where the entries of $a$ and $b$ are
 in $\R[x_1,\ldots,x_n]$. Since not every entry of $w$ is divisible by $h$ we have that $a$ and $b$ are not both the zero vector.
 Letting $A= A_1 x_1+ \ldots +A_n x_n$ we get from $M \cdot w=q \cdot h \cdot e$:
 \[b \cdot x_0 + (x_1^2+\ldots+x_n^2) \cdot a \equiv A \cdot (a \cdot x_0 + b) \,\,\, \mod \, h.\]
 Since the entries of $a,b$ and $A$ are polynomials in $x_1,\ldots,x_n$ and not in $x_0$ we obtain
 by comparing coefficients $A \cdot a= b$ and $A \cdot b = (x_1^2+\ldots+x_n^2) \cdot a$.
 This implies $b^{\rm T} \cdot A \cdot a= b^{\rm T} \cdot b$ and $a^{\rm T} \cdot A \cdot b = (x_1^2+\ldots+x_n^2) \cdot a^{\rm T} \cdot a$.
 Since $A$ is symmetric we get \[b^{\rm T} \cdot b= (x_1^2+\ldots+x_n^2) \cdot a^{\rm T} \cdot a\] which implies the claim.
\end{proof}

\begin{proof}[Proof of Theorem \ref{thm:quad}]
 Let $C \subseteq \R^n$ be the unit ball and
 assume that we have
  \[C=\{p\in \R^{n}: \,\,A(p)= A_0 +A_1  p_1 \ldots + A_n p_n \succeq 0\} \]
where $A_0,\ldots , A_n \in \Sym_r(\R)$ are real symmetric matrices of size $r$.
Let $n > 2^k$ for some non-negative integer $k$. 
Assume for the sake of contradiction that $r \leq 2^k$.
By Proposition \ref{prop:sos} we can find two nonzero polynomials $g_1,g_2 \in \R[x_1, \ldots, x_n]$ that are both a sum of $2^k$ squares of polynomials
with $(x_1^2+\ldots+x_n^2) \cdot g_1 = g_2$. Since the nonzero sums of $2^k$ squares in the rational function field 
$\R(x_1, \ldots, x_n)$ form a multiplicative group \cite[Chapter X, Cor. 1.9.]{LamBook}
we have that $x_1^2+\ldots+x_n^2= g_1^{-1} \cdot g_2$ is a sum of $2^k$ squares of rational functions.
But this is impossible since $n> 2^k$, see \cite[Chapter IX, Cor. 2.4]{LamBook}.
\end{proof}

\begin{Example}
 The key argument was that $(x_1^2+\ldots+x_n^2) \cdot g_1 = g_2$ is impossible when $g_1$ and $g_2$ are both sum of $r=2^k$ squares but $n>r$.
 For arbitrary $r$ this is not true anymore:
 \[
  (x_1^2+x_2^2+x_3^2+x_4^2) \cdot (x_1^2+x_2^2)=(x_1^2+x_2^2)^2+(x_1 x_3 + x_2 x_4)^2+(x_1 x_4 - x_2 x_3)^2.
 \]
 Here $g_1$ and $g_2$ are both sums of $r=3$ squares and $n=4$. 
 This may suggest that one can describe the unit ball $B$ in $\R^4$ using $3 \times 3$ matrices.
 But that is not the case.
 Assume for the sake of contradiction that there is such a representation
 \[B=\{p\in \R^{4}: \,\,A(p)= A_0 +A_1  p_1 + A_2 p_2+ A_3 p_3+ A_4 p_4 \succeq 0\} \]
 with real symmetric matrices $A_0,A_1,A_2,A_3,A_4 \in \Sym_3(\R)$. This implies that
 \[l \cdot (x_0^2-(x_1^2+x_2^2+x_3^2+x_4^2)) = \det(x_0 A_0 + \dots + x_4 A_4)\]
 for some linear form $l \in \R[x_0,\ldots,x_4]$.
 After a linear change of variables, we have \[x_0 \cdot h = \det(x_0 M_0 + \dots + x_4 M_4)\]
 where $M_0,\ldots, M_4 \in \Sym_3(\mathbb{\R})$ and $h$ is a homogeneous polynomial of degree two.
 Furthermore, the matrices $M_0, \ldots, M_4$ are linearly independent since the projective zero set of 
 $x_0^2-(x_1^2+x_2^2+x_3^2+x_4^2)$ is smooth. Letting $x_0=0$ we see that every matrix in the linear span $V$ of 
 $M_1, \ldots, M_4$ has rank at most two. But since $\dim V =4$ that implies that
 $M_1, \ldots, M_4$ either have a common kernel or a common image, see \cite[Exercise 9.24]{Har95} or \cite[Thm. 1.1]{EH}.
 But because these matrices are symmetric that contradicts their linear independence.
 Thus a statement about vector spaces of matrices of rank at most  two gives us a sharp lower bound for $n=4$.
 We will use the same kind of argument in the next section
 to give lower bounds in the case of spectrahedra whose algebraic boundary is defined by a cubic polynomial.
\end{Example}

\section{Cubic polynomials} \label{sec:cub}
Now consider a cubic polynomial $f \in \R[x,y,z]$ which defines a convex region in $\R^3$. It follows from a result in \cite{Buck07} that this
convex region has a  spectrahedral description with $6 \times 6$ matrices. We will show that in most cases, there
is no such representation with $4 \times 4$ (or smaller) matrices.
In fact we will prove the following stronger statement:

\begin{Thm}{\label{thm:haupt}}
 Let $A,B,C \in \Sym_r(\R)$ be real symmetric matrices of size $r$ such that
 \[f=\det(I+Ax+By+Cz)\] is a cubic polynomial. Then the surface in complex
 projective three-space with affine equation $f=0$ is singular.
\end{Thm}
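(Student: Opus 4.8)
The plan is to homogenize and read the hypothesis as a statement about determinantal representations. Put $\wt M := I_r w + Ax + By + Cz$, a symmetric $r\times r$ matrix of linear forms, and let $F$ be the homogenization of $f$; since $f$ is exactly cubic, $\det\wt M = w^{r-3}F$ with $w\nmid F$, and we must show $X:=\{F=0\}\subset\P^3(\C)$ is singular. Applying Lemma~\ref{lem:minor1} with $h=w$ and $m=r-3$ (empty when $r=3$) shows every $4\times4$ minor of $\wt M$ is divisible by $w$, so at $w=0$ the matrix $Ax+By+Cz$ has rank $\le 3$ for all $(x,y,z)$; hence $V:=\langle A,B,C\rangle\subseteq\Sym_r(\C)$ is a linear space of symmetric matrices of rank at most three. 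If $I_r,A,B,C$ are linearly dependent, then after a linear substitution in $w,x,y,z$ the matrix $\wt M$, and hence $F$, involves only three of the coordinates, so $X$ is a cone and we are done; so we may assume $I_r,A,B,C$ linearly independent, in particular $\dim V=3$.

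The conceptual heart is the case $r=3$, where $F=\det\wt M$ exhibits $X$ as a $3\times3$ symmetric determinantal cubic surface. Suppose $X$ were smooth. Then $\wt M$ has corank exactly $1$ along $X$: at a corank-$\ge2$ point all $2\times2$ minors — that is, all entries of $\adj\wt M$ — vanish, and since $d(\det\wt M)=\operatorname{tr}(\adj(\wt M)\,d\wt M)$, such a point would be a singular point of $X$. Hence the cokernel $\mathcal L$ of $\wt M\colon\mathcal O_{\P^3}(-1)^3\to\mathcal O_{\P^3}^3$ is a line bundle on $X$, and the long exact sequence of the resolution $0\to\mathcal O_{\P^3}(-1)^3\to\mathcal O_{\P^3}^3\to\mathcal L\to0$ gives $h^0(X,\mathcal L)=3$. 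Dualizing the resolution and using $\wt M^{\mathrm T}=\wt M$ shows $\mathcal L\cong\mathcal{E}xt^1_{\P^3}(\mathcal L,\mathcal O(-1))$, which for an $\mathcal O_X$-module equals $\mathcal{H}om_X(\mathcal L,\mathcal O_X(-1)\otimes\mathcal O_X(3))$ since $X$ is a cubic surface; therefore $\mathcal L^{\otimes2}\cong\mathcal O_X(2)$. As $\Pic(X)\cong\Z^7$ is torsion-free this forces $\mathcal L\cong\mathcal O_X(1)$, whence $h^0(X,\mathcal L)=h^0(X,\mathcal O_X(1))=4\ne3$, a contradiction. So $X$ is singular when $r=3$.

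For $r\ge4$ I would reduce to $r=3$ by means of the classification of spaces of matrices of rank at most three \cite{Atki,EH}. Applied to the symmetric rank-$\le3$ space $V=\langle A,B,C\rangle$, it yields a compression: either $A,B,C$ have a common kernel subspace $U$, or $V$ is one of a short list of primitive exceptions of small size, each handled by writing out $\det(I_r+Ax+By+Cz)$ directly. In the common-kernel case the hypothesis that $A_0=I_r$ is positive definite is used essentially: the standard bilinear form is then nondegenerate on the real span of $U$, so $\C^r=U\oplus U^{\perp}$ is $\wt M$-invariant with $\wt M|_U=wI_U$ and $\wt M|_{U^{\perp}}=wI+(A,B,C)|_{U^{\perp}}$ still real symmetric; thus $\det\wt M=w^{\dim U}\det(\wt M|_{U^{\perp}})$ and the same cubic $F$ is realized in size $r-\dim U$. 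Descending induction on $r$ then lands us in the $r=3$ case (or in a listed exception), and in every case $X$ is singular.

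I expect the main obstacle to be the bookkeeping of the rank-$\le3$ classification in the symmetric setting — pinning down the compression subspaces and the finitely many primitive exceptions and checking that each forces singularity or strictly lowers $r$ — and making the reality hypothesis carry its weight there, since over $\C$ without it a common kernel could be isotropic and the orthogonal splitting would break down. By contrast the homogenization, the passage to bounded-rank spaces via Lemma~\ref{lem:minor1}, the disposal of the cone cases, and the Picard-group computation settling $r=3$ are all comparatively routine.
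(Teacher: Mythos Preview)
Your $r=3$ argument via the Picard group of a smooth cubic surface is correct and is a genuinely different, more conceptual route than the paper's: the paper never isolates $r=3$, but instead uses the Atkinson/Eisenbud--Harris classification to put the pencil into an explicit normal form (its Lemma~\ref{lem:eh}) valid for all $r\le 5$, and then writes down, by hand, a point $p\in\C^3$ at which $I+Ap_1+Bp_2+Cp_3$ has corank two, hence a singular point of $X$. Your cokernel/duality computation that a symmetric $3\times3$ determinantal cubic would force $\sL^{\otimes 2}\cong\sO_X(2)$ with $h^0(\sL)=3$, contradicting $\Pic(X)\cong\Z^7$, is the classical obstruction and is tidy.

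The genuine gap is in your reduction from $r\ge 4$ to $r=3$. You write that the classification ``yields a compression: either $A,B,C$ have a common kernel subspace $U$, or $V$ is one of a short list of primitive exceptions,'' and then strip off $U$ to lower $r$. But ``compression space'' in \cite{Atki,EH} does not mean common kernel: in the symmetric rank-$\le3$ case with $\bigcap_i\ker A_i=0$ (exactly what remains after your splitting step), the classification produces subspaces $V'\subset\R^r$ with $\dim V'=r-2$ and $W$ with $\dim W=1$ such that each $A_i$ maps $V'$ into $W$. This is the content of Lemma~\ref{lem:eh}, and it gives only $r\le 5$, not $r=3$; there is no further common kernel to peel off. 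Concretely, in that normal form one has $\det(I+Ax+By+Cz)=(1+g_1)(1+g_3)-g_2^2-q\bigl(\alpha^2(1+g_3)+\beta^2(1+g_1)-2\alpha\beta g_2\bigr)$ with $q=l_1^2+\cdots+l_{r-2}^2$, which for $r\ge4$ is not a $3\times3$ symmetric determinant in linear forms (one would need $\sqrt{q}$). So your Picard argument does not apply to the $r=4,5$ compression cases, and the descent-by-common-kernel cannot be iterated there. That remaining case is precisely where the paper does the real work: it solves $g_2=\alpha\beta q$, $\beta g_1=\alpha g_2-\beta$, $\alpha g_3=\beta g_2-\alpha$ and checks the resulting corank-two factorization $I+Ap_1+Bp_2+Cp_3=SS^{\mathrm T}$.
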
 

\begin{Cor}\label{cor:no4x4}
 Let $C \subseteq \R^3$ be a convex region with nonempty interior whose algebraic boundary is defined by a cubic polynomial
 $f \in \R[x,y,z]$ such that the surface in complex
 projective three-space with affine equation $f=0$ is smooth. Then there is no description
 \[C=\{(x,y,z) \in \R^3:\, A_0+A_1x+A_2y+A_3z \succeq 0 \}\]
 for some real symmetric matrices $A_0,A_1,A_2,A_3 \in \Sym_{4}(\R)$.
\end{Cor}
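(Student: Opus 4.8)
The plan is to deduce this from Theorem \ref{thm:haupt} applied with $r=4$. Suppose for contradiction that $C=\{(x,y,z)\in\R^3:\ A_0+A_1x+A_2y+A_3z\succeq 0\}$ for some $A_0,\dots,A_3\in\Sym_4(\R)$. Since $C$ has nonempty interior we may pick an interior point; there the matrix $A_0+A_1x+A_2y+A_3z$ is positive semidefinite and has no kernel (otherwise the point would lie on $\partial C$), hence it is positive definite. Translating coordinates so that this point is the origin --- which replaces $f$ by a translate and preserves all hypotheses --- we may assume $A_0\succ 0$, and after replacing each $A_i$ by $A_0^{-1/2}A_iA_0^{-1/2}$, which preserves symmetry and the solution set, we may assume $A_0=I_4$. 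Put $g=\det(I_4+A_1x+A_2y+A_3z)\in\R[x,y,z]$, of degree at most $4$. At every Euclidean boundary point of $C$ the matrix is singular, so $g$ vanishes on $\partial C$ and hence on the algebraic boundary $V(f)$; since the projective surface defined by $f$ is smooth it is irreducible, so $f$ is irreducible and, vanishing on all of $V(f)$, the polynomial $g$ is divisible by $f$.

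Now pass to the projective picture. Homogenizing yields the degree-$4$ form $\widehat g(x_0,x,y,z)=\det(x_0 I_4+A_1x+A_2y+A_3z)$, whose coefficient of $x_0^4$ equals $\det I_4=1$. Let $F$ be the homogenization of $f$: it has degree $3$, it is irreducible, it is not divisible by $x_0$, and the smooth surface of the hypothesis is $V(F)\subseteq\P^3_{\C}$. From $V(f)\subseteq V(g)$ we get, taking projective closures, $V(F)\subseteq V(\widehat g)$; as $F$ is irreducible this gives $F\mid\widehat g$, so $\widehat g=\widehat\ell\cdot F$ for a real linear form $\widehat\ell$. The point $q_0=(1{:}0{:}0{:}0)$ satisfies $\widehat g(q_0)=\det I_4\neq 0$, so $q_0$ lies off the hyperplane $\{\widehat\ell=0\}$; after possibly negating both $\widehat\ell$ and $F$ we may assume $\widehat\ell(q_0)>0$.

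The crucial step is to choose a real projective change of coordinates of $\P^3$ taking the hyperplane $\{\widehat\ell=0\}$ to $\{x_0=0\}$ and the point $q_0$ to $(1{:}0{:}0{:}0)$; this is possible precisely because $q_0$ does not lie on $\{\widehat\ell=0\}$. The corresponding real linear substitution carries the pencil $x_0 I_4+A_1x+A_2y+A_3z$ to a pencil $M=M_0x_0+M_1x+M_2y+M_3z$ whose coefficient matrices are real linear combinations of $I_4,A_1,A_2,A_3$ --- hence real and symmetric --- and one computes that $\det M=x_0\cdot F'$, where $F'$ is the (still irreducible, still not divisible by $x_0$) image of $F$, and that $M_0=M(1,0,0,0)$ is a positive multiple of $I_4$. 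Replacing $M$ by $M_0^{-1/2}MM_0^{-1/2}$ turns its $x_0$-coefficient into $I_4$, and then setting $x_0=1$ produces symmetric matrices $A',B',C'\in\Sym_4(\R)$ with $\det(I_4+A'x+B'y+C'z)$ equal to a nonzero scalar multiple of $F'|_{x_0=1}$, which is a cubic polynomial because $F'$ is a cubic not divisible by $x_0$. By Theorem \ref{thm:haupt} with $r=4$, the complex projective surface with this affine equation --- which is $V(F')$, since homogenizing $F'|_{x_0=1}$ recovers $F'$ --- is singular. But $V(F')$ is the image of $V(F)$ under a projective linear isomorphism, hence smooth, a contradiction, which proves the corollary.

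The only ingredient beyond Theorem \ref{thm:haupt} is the observation that it is valid for arbitrary $r$, in particular $r=4$, together with the coordinate normalization that strips the linear factor $\widehat\ell$ off $\widehat g$ while keeping a positive definite value of the pencil; once $\det(I_4+A'x+B'y+C'z)$ has become a cubic the theorem supplies the contradiction. Accordingly I expect the only point needing care to be the simultaneous arrangement of $\widehat\ell=x_0$ and $q_0=(1{:}0{:}0{:}0)$ by a single real coordinate change; the remaining verifications --- that $F'$ remains an irreducible cubic prime to $x_0$, that dehomogenization recovers it, and that the final congruence does not disturb anything --- are routine.
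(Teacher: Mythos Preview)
Your argument is correct and follows essentially the same route as the paper's proof: factor the homogenized determinant as a linear form times $f^h$, then change projective coordinates so that the linear factor becomes the hyperplane at infinity while keeping the constant matrix equal to $I$, and apply Theorem~\ref{thm:haupt}. The paper compresses your steps 5--8 into the single sentence ``after linear changes of coordinates we can assume that $l=w$ and $A_0=I$''; you have spelled out precisely why this simultaneous normalization is possible, namely because $q_0=(1{:}0{:}0{:}0)$ lies off the hyperplane $\{\widehat\ell=0\}$.

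One small point: the claim that at an interior point the matrix has no kernel (``otherwise the point would lie on $\partial C$'') is not automatic---a linear matrix inequality can describe a full-dimensional set while never being positive definite, if the $A_i$ share a kernel vector. The standard fix is to note that if $M(p_0)v=0$ with $p_0$ interior and $M(p_0)\succeq 0$, then $v^{\mathrm T}M(q)v$ is affine in $q$, nonnegative on $C$, and zero at the interior point $p_0$, hence identically zero; since $M(q)\succeq 0$ on the interior this forces $M(q)v=0$ there and hence $A_iv=0$ for all $i$, so one may pass to the orthogonal complement and reduce $r$. After this reduction your argument (and the invocation of Theorem~\ref{thm:haupt}, which allows arbitrary $r$) goes through unchanged. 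The paper's proof glosses over the same point.
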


\begin{proof}
 Assume that there is such a description.
 Let $f^h(w,x,y,z)$ be the homogenization of $f$ using the new variable $w$. Then the assumption implies that we have
 \[l(w,x,y,z) \cdot f^h(w,x,y,z) = \det(A_0w+A_1x+A_2y+A_3z)\] for some linear form $l \in \R[w,x,y,z]_1$.
 After linear changes of coordinates we can assume that $l=w$ and $A_0=I$.
 Then we have \[f=f^h(1,x,y,z)= \det(I+A_1x+A_2y+A_3z)\] which is a contradiction to the preceding
 theorem.
\end{proof}

\begin{Example}{\label{exp:dericone}}
 Let $P \subseteq \R^4$ be the polyhedral cone generated by $(1,0,1,1)$, $(1,-1,-1,1)$, $(1,1,-1,1)$, $(1,0,1,-1)$, $(1,-1,-1,-1)$ and $(1,1,-1,-1)$.
 The algebraic boundary of $P$ is the zero set of a  polynomial of degree five (a product of linear forms). 
 The second derivative in first coordinate direction of this polynomial defines a convex cone. 
  Let $C \subseteq \R^3$ be the affine slice of that cone obtained by setting the first coordinate equal to one.
  The algebraic boundary
 of $C$ is given by the zero set of the cubic polynomial in three variables
 \[f=10 - 3 x^2 - 6  y - x^2 y - 3  y^2 + y^3 - 
 3  z^2 + y z^2.\]
 One checks that the surface in complex
 projective three-space with affine equation $f=0$ is smooth. Therefore, $C$ has no description as a spectrahedron by $4 \times 4$ matrices.
 
 As mentioned above it has  a spectrahedral description using $6 \times 6$ matrices and it is not known whether that can be achieved with $5 \times 5$ matrices.
\end{Example}

\begin{figure}[h]
 \includegraphics[width=100pt]{./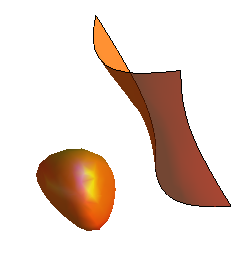}
 \caption{The algebraic boundary of the convex set $C$ in Example \ref{exp:dericone} is smooth. Thus $C$ has a description as a spectrahedron by $6 \times 6$ matrices but not by 
 $4 \times 4$ matrices.}
 \label{fig:dericone}
\end{figure}

\begin{Example}
 Let $e_{3,n}=\sum_{1 \leq i_1<i_2<i_3 \leq n} x_{i_1} x_{i_2} x_{i_3}$ be the elementary symmetric polynomial of degree $3$ in $n \geq 5$
 variables. The dimension of the singular locus of the projective variety \[\{p \in \mathbb{P}^{n-1}:\,\, e_{3,n}(p)=0\}\]
 is less than $n-4$. Therefore, if we intersect with a general three-dimensional subspace we get a smooth variety in $\mathbb{P}^3$ by  Bertini's 
 theorem. As in the proof of Corollary \ref{cor:no4x4}
 there is thus no linear form $l \in \R[x_1, \ldots,x_n]$ such that \[l^N \cdot e_{3,n}= \det(A_1 x_1+\ldots+A_n x_n)\]
 with real symmetric matrices $A_1, \ldots, A_n$ and $A_1v_1+\ldots+A_n v_n \succ 0$ for some point $v \in \R^n$.
 By the same argument we see that there are no real symmetric matrices $A_1, \ldots, A_n$ such that
 $e_{3,n}(1+x_1,\ldots,1+x_n)=\det(I+A_1x_1+\ldots+A_nx_n)$.
\end{Example}
For the proof of Theorem \ref{thm:haupt} we will need the following simple Lemma.
\begin{Lemma}{\label{lem:real}}
 Let $A_1,\ldots, A_n \in \Sym_r(\R)$.
 If  $\det(I+A_1x_1+\ldots+A_nx_n)$ is a polynomial of degree $d$, 
 then $\textnormal{rk}( A_1p_1+\ldots+A_n p_n) \leq d$ for all $p \in \R^n$.
\end{Lemma}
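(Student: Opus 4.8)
The plan is to reduce the multivariate statement to a one‑variable degree count. Fix an arbitrary point $p = (p_1, \ldots, p_n) \in \R^n$ and set $A = A_1 p_1 + \ldots + A_n p_n \in \Sym_r(\R)$; write $\rho = \textnormal{rk}(A)$. It suffices to prove $\rho \leq d$. Substituting $x_i = t p_i$ into $\det(I + A_1 x_1 + \ldots + A_n x_n)$ yields the univariate polynomial $\det(I + tA) \in \R[t]$, and since the original polynomial has total degree $d$ by hypothesis, this substitution produces a polynomial in $t$ of degree at most $d$. Hence the whole problem comes down to showing that $\deg_t \det(I + tA) = \rho$.

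For this I would expand $\det(I + tA) = \sum_{k=0}^{r} e_k(A)\, t^k$, where $e_k(A)$ is the sum of all $k \times k$ principal minors of $A$, equivalently the $k$‑th elementary symmetric function of the eigenvalues of $A$. If $k > \rho$, then every $k \times k$ submatrix of $A$ is singular, so $e_k(A) = 0$; thus $\deg_t \det(I + tA) \leq \rho$. For the reverse inequality one uses that $A$ is real symmetric: it is orthogonally diagonalizable with real eigenvalues, exactly $\rho$ of which are nonzero, say $\lambda_1, \ldots, \lambda_\rho$, and then $e_\rho(A) = \lambda_1 \cdots \lambda_\rho \neq 0$. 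Therefore $\rho = \deg_t \det(I + tA) \leq d$, and since $p$ was arbitrary this proves the lemma.

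The only step that requires genuine care — and the one place where symmetry (or at least diagonalizability) is really used — is the non‑vanishing of the top coefficient $e_\rho(A)$: for a general square matrix this can fail, since a nonzero nilpotent matrix has $e_k = 0$ for every $k \geq 1$, so $\det(I + tA)$ would then be identically $1$ while the rank is positive. For real symmetric $A$ this difficulty evaporates. Everything else — the substitution and the principal‑minor expansion of $\det(I + tA)$ — is entirely routine.
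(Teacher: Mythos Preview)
Your argument is correct and is essentially the same as the paper's: both restrict to the line $x_i = t p_i$, observe that $\det(I+tA)$ has degree at most $d$, and use real symmetry (diagonalizability) to identify that degree with the rank of $A$. The paper phrases the last step via the characteristic polynomial of $A$ having a zero of multiplicity at least $r-d$ at the origin, which is just the reciprocal formulation of your $e_\rho(A)\neq 0$ observation.
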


\begin{proof}
 The characteristic polynomial of the matrix
  $M=A_1p_1+\ldots+A_n p_n$  has a root of multiplicity at least $r-d$ at $0$. Since $M$ is symmetric, it has rank at most $d$.
\end{proof}

In the proof of the following important Lemma we use the characterization of vector spaces of matrices of rank at most $3$
which was found independently by the authors of \cite{Atki} and \cite{EH}.

\begin{Lemma}{\label{lem:eh}}
 Let  $A_1,A_2,A_3 \in \Sym_r(\R)$ be symmetric matrices such that the matrix of maximal rank
 in their span has rank three. Further assume that $\cap_{i=1}^3 \ker(A_i) =0$.
 Then, after a change of basis, we have
 \[A_1x+A_2y+A_3z = \begin{pmatrix}
                            0 & \cdots & 0 & \alpha \cdot l_1 & \beta \cdot l_1 \\
                            \vdots & & \vdots & \vdots & \vdots \\
                            0 & \cdots & 0 & \alpha \cdot l_{r-2} & \beta \cdot l_{r-2} \\
                            \alpha \cdot l_1 & \cdots & \alpha \cdot l_{r-2} & g_1 & g_2 \\
                            \beta \cdot l_1 & \cdots & \beta \cdot l_{r-2} & g_2 & g_3
                           \end{pmatrix},
 \] where $l_1, \ldots, l_{r-2}, g_1, g_2, g_3 \in \R[x,y,z]$ are linear forms and $\alpha, \beta \in \R$. In particular $r \leq 5$.
\end{Lemma}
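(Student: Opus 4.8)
My plan is to prove the symmetric case of the Atkinson--Eisenbud--Harris classification of rank-$\le 3$ matrix spaces directly and extract the normal form from it. Write $V=\langle A_1,A_2,A_3\rangle\subseteq\Sym_r(\R)$; by hypothesis every $A\in V$ has rank $\le 3$, some $A_0\in V$ has rank exactly $3$, and $\bigcap_{A\in V}\ker A=\bigcap_i\ker A_i=0$. First I would reformulate the goal: it suffices to find an $(r-2)$-dimensional subspace $Z\subseteq\R^r$ and a vector $v\in Z^\perp\setminus\{0\}$ with $A(Z)\subseteq\langle v\rangle$ for all $A\in V$. Indeed, picking an orthonormal basis $e_1,\dots,e_{r-2}$ of $Z$ and $e_{r-1},e_r$ of $Z^\perp$ and writing $v=\alpha e_{r-1}+\beta e_r$ (with $(\alpha,\beta)\ne 0$, which holds since $v\ne 0$), the conditions $A(e_i)\in\langle v\rangle$ for $i\le r-2$ together with $v\perp Z$ give precisely the matrix displayed in the lemma, with $l_i$ the $v$-coefficient of $A(e_i)$ and $\left(\begin{smallmatrix}g_1&g_2\\g_2&g_3\end{smallmatrix}\right)$ the restriction of $A$ to $Z^\perp$; all these are linear forms in $x,y,z$ because $A=A_1x+A_2y+A_3z$. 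The bound $r\le 5$ is then automatic: a relation $\sum c_il_i=0$ would make the nonzero vector $\sum c_ie_i$ lie in $\bigcap_i\ker A_i$, so the $l_i$ are linearly independent in the three-dimensional space $\R[x,y,z]_1$.

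To produce $Z$ and $v$, decompose $\R^r=K\oplus W_0$ with $K=\ker A_0$ (dimension $r-3$) and $W_0=\imag A_0=K^\perp$ (dimension $3$), and write $A\in V$ in block form $\left(\begin{smallmatrix}P_A&R_A^{\mathrm T}\\ R_A&Q_A\end{smallmatrix}\right)$, where $A_0$ has $P_{A_0}=0$, $R_{A_0}=0$ and $Q_0:=Q_{A_0}$ invertible. Applying the Schur-complement rank formula to $A_0+tA\in V$ and using $\rank(A_0+tA)\le 3=\rank Q_0$ forces the Schur complement $tP_A-t^2R_A^{\mathrm T}(Q_0+tQ_A)^{-1}R_A$ to vanish for all small $t$; reading off the $t^1$- and $t^2$-coefficients gives $P_A=0$ and $R_A^{\mathrm T}Q_0^{-1}R_A=0$ for every $A\in V$. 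The latter says that $\imag R_A$ is a totally isotropic subspace of $W_0$ for the nondegenerate ternary form $Q_0^{-1}$, hence $\rank R_A\le 1$; and a short argument using $\bigcap_A\ker A=0$ and $r\ge 4$ shows that the lines $\imag R_A$ all coincide — they cannot all be zero, since then $K\subseteq\bigcap_A\ker A=0$ would force $r=3$. So there is a fixed $v\in W_0\setminus\{0\}$ with $R_A=v\lambda_A^{\mathrm T}$, $\lambda_A\in K$ linear in $A$ and not identically zero.

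It remains to enlarge $K$ by one dimension inside $W_0$: I look for $z_0\in(v^\perp\cap W_0)\setminus\{0\}$ with $Q_Az_0\in\langle v\rangle$ for all $A$. Then $Z:=K\oplus\langle z_0\rangle$ has dimension $r-2$ and satisfies $A(Z)\subseteq\langle v\rangle$ (for $k\in K$ the vector $Ak$ has $K$-part $0$ and $W_0$-part $(\lambda_A^{\mathrm T}k)v$, while $Az_0$ has $K$-part $R_A^{\mathrm T}z_0=(v^{\mathrm T}z_0)\lambda_A=0$ and $W_0$-part $Q_Az_0\in\langle v\rangle$), and $v\in Z^\perp$ since $v\perp K$ and $v\perp z_0$. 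To find $z_0$: for $A$ with $\lambda_A\ne 0$, an orthonormal change of basis of $K$ sending $\lambda_A$ to a coordinate direction collapses $A$, after deleting zero rows and columns, to the $4\times 4$ symmetric matrix $\left(\begin{smallmatrix}0&v^{\mathrm T}\\ v&Q_A\end{smallmatrix}\right)$, whose determinant is $-v^{\mathrm T}\adj(Q_A)v$; so $\rank A\le 3$ forces $v^{\mathrm T}\adj(Q_A)v=0$, and since this is polynomial in $A$ and holds on the Zariski-dense set $\{\lambda_A\ne 0\}$, it holds on all of $V$. Choosing an orthonormal basis of $W_0$ whose first vector is $v/\lVert v\rVert$ (so the other two span $v^\perp\cap W_0$), the identity $v^{\mathrm T}\adj(Q_A)v=0$ says that the symmetric lower-right $2\times 2$ block $\tilde Q_A$ of $Q_A$ is singular; since $\tilde Q_A$ depends linearly on $A$, the family $\{\tilde Q_A:A\in V\}$ is a linear space of symmetric $2\times 2$ matrices of rank $\le 1$. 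Such matrices form a nondegenerate quadric cone in $\Sym_2(\R)\cong\R^3$, which contains no $2$-plane, so $\dim\{\tilde Q_A\}\le 1$; hence all the $\tilde Q_A$ share a kernel vector $z_0\in v^\perp\cap W_0$, and one checks $Q_Az_0\in\langle v\rangle$ for every $A$. This finishes the construction.

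The main obstacle is exactly the passage from the pointwise rank bound — each $A$ lying on the rank-$3$ locus — to a single subspace $Z$ that compresses all of $V$ at once; in the argument above this is made possible because, after the reductions, the obstruction is confined first to isotropic subspaces of a nondegenerate ternary quadratic form and then to the small space of symmetric $2\times 2$ matrices of bounded rank. Besides this, a couple of degenerate configurations — most visibly $r=3$, where $K=0$ and the reductions above are empty — need separate, routine handling.
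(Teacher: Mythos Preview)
Your argument is correct for $r\ge 4$ and follows a genuinely different route from the paper. The paper invokes the Atkinson--Eisenbud--Harris classification of linear spaces of matrices of rank $\le 3$ as a black box and then eliminates three of the four cases (degenerate, primitive, skew-symmetric primitive part) to land in the compression-space case, from which the normal form is read off over $\R$. You instead rebuild the compression structure from scratch, exploiting symmetry at every step: the Schur-complement identity forces $P_A=0$ and the isotropy relation $R_A^{\mathrm T}Q_0^{-1}R_A=0$; this confines each $\imag R_A$ to the isotropic cone of a nondegenerate real ternary form, so $\rank R_A\le 1$, and then (via the polarized identity for $r=4$, or the rank-one linear-space dichotomy together with $\bigcap\ker A_i=0$ for $r\ge 5$) all nonzero $R_A$ share a single image line $\langle v\rangle$; finally the bordered-determinant identity $v^{\mathrm T}\adj(Q_A)\,v=0$ reduces the search for $z_0$ to a linear family of singular symmetric $2\times 2$ matrices, which necessarily has a common kernel vector. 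Your proof is longer but entirely self-contained and makes transparent exactly where symmetry and the real ground field enter; the paper's is shorter but outsources the structural core to \cite{Atki,EH}. One small point worth spelling out: your ``short argument'' that the image lines coincide really does bifurcate into the two mechanisms just named, and neither one alone covers both $r=4$ and $r\ge 5$.

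One caveat on the final sentence of your sketch: the case $r=3$ is not ``routine''. The displayed normal form asks for a nonzero vector $u$ with $Au$ lying, for every $A$ in the span, in a fixed line whose pairing with $u$ vanishes; for $A_1x+A_2y+A_3z=\mathrm{diag}(x,y,z)$ one checks that no such $u$ exists under any real change of basis (congruence or conjugation). So the lemma as literally stated fails for $r=3$, and the paper's own compression-space argument has the same gap there (the step ``$V$ is spanned by $\ker A_1,\ker A_2,\ker A_3$'' is vacuous when the $A_i$ are invertible). You should flag this rather than promise a separate fix; for the application to Theorem~\ref{thm:haupt} the case $r=3$ is the classical Cayley symmetroid and is singular for independent reasons.
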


\begin{proof}
 Let $M$ be the span of $A_1,A_2,A_3$ over $\C$.
 Without loss of generality we can assume that $A_1, A_2$ and $A_3$ all have rank three.
 According to \cite[Cor. 1.3]{EH} we are in one of the 
 following situations (notation as in \cite{EH}):
 \begin{enumerate}[a)]
 \item ``$M$ is degenerate'': Since the $A_i$ are symmetric, this is excluded by $\cap_{i=1}^3 \ker(A_i) =0$.
 \item ``$M$ is primitive'': Can be ruled out, since $M$ is spanned by just $3$ matrices.
 \item ``$M$ has primitive part the space of $3 \times 3$ skew-symmetric matrices'':
 In that case we have  $r=4$
 and we can find a  basis $M_1, M_2, M_3$ of $M$ and invertible matrices $R,S$ such that
 \[ R \cdot (M_1 x+ M_2 y+ M_3 z) \cdot S = \begin{pmatrix}
                                                   0 & x & y & 0 \\
                                                   -x & 0 & y & 0 \\
                                                   -y & -z & 0 & 0\\
                                                   l_1 & l_2 & l_3 & l_4
                                                  \end{pmatrix}
 \] for some linear forms $l_1, l_2, l_3, l_4 \in \C[x,y,z]$. Therefore, the matrix
 \[\begin{pmatrix}
                                                   0 & x & y & 0 \\
                                                   -x & 0 & z & 0 \\
                                                   -y & -z & 0 & 0\\
                                                   l_1 & l_2 & l_3 & l_4
                                                  \end{pmatrix} \cdot S^{-1} \cdot R^{\textrm{T}}\]
is symmetric. In particular, if $U$ is the upper left $3 \times3$ submatrix of $   S^{-1}     R^{\textrm{T}} $, then we can
multiply every skew-symmetric $3 \times 3$ matrix to $U$ from the left and obtain a symmetric matrix. But one can easily check that
this implies that $U=0$ which is a contradiction.
  \item ``$M$ is a compression space'': Since $M$ is not degenerate and since the $A_i$ are symmetric,
  there  are linear subspaces $V,W \subseteq \C^r$ with $\dim(V)=r-2$ and $\dim(W)=1$
  such that $A v \in W$ for all $v \in V$ and $A \in M$. In particular, the kernel of each $A_i$ is a subset of
   $V$. Since $\cap_{i=1}^3 \ker(A_i) =0$, it follows that $V$ is spanned by $\ker(A_1)$, $\ker(A_2)$ and $\ker(A_3)$ and that
   $A_i(V)=W$.
   Therefore $V$ and $W$ are defined over $\R$. After a change of basis (over $\R$) we can assume that $V$ is spanned by the first
   $r-2$ unit vectors.  Then $A_1x+A_2y+A_3z$ has the desired form.
 \end{enumerate} 
 The last statement follows since for $r >5$ the $A_i$ would have a common kernel vector.
\end{proof}

\begin{proof}[Proof of Theorem \ref{thm:haupt}]
 The matrices $A, B, C$ span a vector space of matrices of rank at most three by  Lemma \ref{lem:real}.
 After possibly replacing  $A, B$ and $C$
 by smaller matrices we can assume that $A, B$ and $C$ do not have a nonzero common kernel vector.
 According to Lemma \ref{lem:eh} the matrix pencil $Ax+By+Cz$ has, after choosing a suitable basis, the following form:
  \[Ax+By+Cz = \begin{pmatrix}
                            0 & \cdots & 0 & \alpha \cdot l_1 & \beta \cdot l_1 \\
                            \vdots & & \vdots & \vdots & \vdots \\
                            0 & \cdots & 0 & \alpha \cdot l_{r-2} & \beta \cdot l_{r-2} \\
                            \alpha \cdot l_1 & \cdots & \alpha \cdot l_{r-2} & g_1 & g_2 \\
                            \beta \cdot l_1 & \cdots & \beta \cdot l_{r-2} & g_2 & g_3
                           \end{pmatrix},
 \] where $l_1, \ldots, l_{r-2}, g_1, g_2, g_3 \in \R[x,y,z]_1$ are linear forms and $\alpha, \beta \in \R$.
 
 Now let $p \in \C^3$ such that $g_2(p)=\alpha \beta (l_1(p)^2+\ldots+l_{r-2}(p)^2)$, $\beta g_1(p)=\alpha g_2(p)-\beta$
 and $\alpha g_3(p)=\beta g_2(p)-\alpha$. For generic data there are exactly two distict points that satisfy these equations.
 One directly verifies that $I+A_1 p_1+ A_2 p_2+ A_3 p_3=S  S^{\textnormal{T}}$ where 
  \[S=
                           \begin{pmatrix}
                            1 & \cdots & 0  \\
                            \vdots & & \vdots  \\
                            0 & \cdots & 1  \\
                            \alpha \cdot l_1 & \cdots & \alpha \cdot l_{r-2} \\
                            \beta \cdot l_1 & \cdots & \beta \cdot l_{r-2} 
                           \end{pmatrix}.\]
 Since $\textnormal{rk}(S)=r-2$ the matrix $S  S^{\textnormal{T}}$ has two dimensional kernel. Thus the zero set of $f$
 is singular at the point $p$.
\end{proof}

\begin{Cor}
  Let $A,B,C \in \Sym_r(\R)$ be real symmetric matrices of size $r$ such that
 \[f=\det(I+Ax+By+Cz)\] is a cubic polynomial. Then we can find $A',B',C' \in \Sym_5(\R)$
 such that
 \[f=\det(I+A'x+B'y+C'z).\]
\end{Cor}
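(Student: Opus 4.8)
The plan is to repeat the reduction carried out in the proof of Theorem~\ref{thm:haupt} and then to pad the resulting small matrices with zeros. So first I would pass to the case that $A,B,C$ have no nonzero common kernel vector. Let $K=\ker(A)\cap\ker(B)\cap\ker(C)$ and write $\R^r=K^\perp\oplus K$; since $A,B,C$ are symmetric and vanish on $K$ they preserve $K^\perp$, so in a basis adapted to this orthogonal decomposition each of them is block diagonal with a zero block on $K$. Denoting by $\wt A,\wt B,\wt C\in\Sym_{r'}(\R)$ the blocks on $K^\perp$, where $r'=r-\dim K$, one has
\[
 \det(I_r+Ax+By+Cz)=\det(I_{r'}+\wt Ax+\wt By+\wt Cz),
\]
the matrices $\wt A,\wt B,\wt C$ have no nonzero common kernel vector, and their span still consists of matrices of rank at most three. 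Renaming, we may therefore assume from the start that $\ker(A)\cap\ker(B)\cap\ker(C)=0$.

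Next I would verify the hypotheses of Lemma~\ref{lem:eh}. By Lemma~\ref{lem:real} every matrix in the span of $A,B,C$ has rank at most three. Moreover the span contains a matrix of rank exactly three: the homogeneous part of degree $k$ of $\det(I+Ax+By+Cz)$, evaluated at a point, is the $k$-th elementary symmetric function of the eigenvalues of the corresponding matrix of the pencil $Ax+By+Cz$, so if every such matrix had rank at most two this part would vanish for $k\ge 3$, contradicting the assumption that $f$ is cubic. Hence the matrix of maximal rank in the span of $A,B,C$ has rank three, and Lemma~\ref{lem:eh} gives $r\le 5$. Finally, if $r<5$ I would put
\[
 A'=\begin{pmatrix}A & 0\\ 0 & 0\end{pmatrix},\qquad B'=\begin{pmatrix}B & 0\\ 0 & 0\end{pmatrix},\qquad C'=\begin{pmatrix}C & 0\\ 0 & 0\end{pmatrix}\in\Sym_5(\R)
\]
with a zero block of size $5-r$, so that $\det(I_5+A'x+B'y+C'z)=\det(I_r+Ax+By+Cz)=f$; for $r=5$ nothing has to be done.

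There is no serious obstacle here, since the real work is already contained in Lemma~\ref{lem:eh}. The only points that need attention are getting the first reduction right (so that $\det(I+Ax+By+Cz)$ is genuinely unchanged and the smaller matrices stay symmetric, which is why the orthogonal complement is used) and checking that the span really contains a rank-three matrix, which is precisely where the hypothesis that $f$ is cubic enters in order to meet the assumption of that lemma.
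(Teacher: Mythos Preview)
Your proof is correct and follows essentially the same route as the paper: reduce to trivial common kernel, invoke Lemma~\ref{lem:real} and Lemma~\ref{lem:eh} to bound the size by five, and pad with zeros. You even make explicit the point that the span must contain a matrix of rank exactly three (needed for the hypothesis of Lemma~\ref{lem:eh}), which the paper's proofs of Theorem~\ref{thm:haupt} and of this corollary leave implicit.
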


\begin{proof}
 In the proof of Theorem \ref{thm:haupt} we have seen that one can replace $A,B$ and $C$ by matrices such that their span
 is of the form as in Lemma \ref{lem:eh}. Thus we can bound their size by five.
\end{proof}

\begin{Example}\label{exp:twosing}
Let $C \subseteq \R^3$ be a convex region with nonempty interior whose algebraic boundary is defined by a cubic polynomial
 $f \in \R[x,y,z]$. Let $C$ have a description as a spectrahedron using $4 \times 4$ matrices.
 We have seen that the surface defined by $f$ in complex projective three-space will have generically two singularities.
 These may be real or not: 
 \begin{itemize}
  \item  Let $C_1 \subseteq \R^3$ be the solution set of the following linear matrix inequality:
  \[\begin{pmatrix}
     1 & 0 & x & -x \\
     0 & 1 & y & -y \\
     x & y & 1+x & x+z \\
     -x & -y & x+z & 1-y+z
    \end{pmatrix} \succeq 0.\]
  The algebraic boundary of $C_1$ has exactly two nodes and both of them are real.
 \item  Let $C_2 \subseteq \R^3$ be the solution set of the following linear matrix inequality:
  \[\begin{pmatrix}
     1 & 0 & \sqrt{2} \cdot x & x \\
     0 & 1 & \sqrt{2} \cdot y & y \\
     \sqrt{2} \cdot x & \sqrt{2} \cdot y & 1+x-y & \sqrt{2} \cdot z \\
     x & y & \sqrt{2} \cdot z & 1+x+z
    \end{pmatrix} \succeq 0.\]
    The algebraic boundary of $C_2$ has exactly two nodes and none of them is real.
 \end{itemize}
 Note that both $C_1$ and $C_2$ do not admit a spectrahedral representation by $3 \times 3$ matrices because Cayley's cubic
 symmetroid \cite{cay1869} has four nodes.
\end{Example}

In order to proof Theorem \ref{thm:haupt} we used the characterization of vector spaces of matrices of rank at most three.
There is no such characterization known for rank more than three, but a study of some example suggests that 
Theorem \ref{thm:haupt} could be true for higher degree too.

\begin{Conjecture}
  Let $A,B,C \in \Sym_r(\R)$ be real symmetric matrices of size $r$ such that
 \[f=\det(I+Ax+By+Cz)\] is a polynomial of degree at least three. Then the surface in complex
 projective three-space with affine equation $f=0$ is singular.
\end{Conjecture}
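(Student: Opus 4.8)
Write $d=\deg f\ge 3$, $M=Ax+By+Cz$, and let $g=f^h$ be the homogenization in a new variable $w$. By Lemma~\ref{lem:real} the linear space $V=\langle A,B,C\rangle$ consists of symmetric matrices of rank at most $d$. As in the proof of Theorem~\ref{thm:haupt} we may shrink $A,B,C$ so that they have no common kernel, without changing $f$; we may moreover assume that $V$ is indecomposable, since any splitting $\C^r=\C^{r_1}\oplus\C^{r_2}$ respected by $V$ would factor $f=f_1f_2$ with both factors nonconstant (each $f_i\equiv 1$ would force $M|_{\C^{r_i}}\equiv 0$, against the no-common-kernel assumption) and hence make the surface $\{g=0\}$ reducible, so singular. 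With these reductions one has $\det(wI+M)=w^{r-d}g$, where $g$ is a form of degree $d$ with $g(0,x,y,z)=e_d(M)\not\equiv 0$. If some $p\in\C^3$ has $\rank(I+M(p))\le r-2$, then $\adj(I+M(p))=0$, so the gradient of the linear-matrix determinant $\det(wI+M)$ vanishes at $(1,p)$; since $\det(wI+M)=w^{r-d}g$ and $g(1,p)=f(p)=0$, this forces $\nabla g(1,p)=0$, so $\{g=0\}\subseteq\P^3$ is singular at $(1,p)$. Thus it suffices to exhibit a point $p\in\C^3$ with $\rank(I+M(p))\le r-2$, equivalently a point $(x,y,z)$ at which $M(x,y,z)$ has a nonzero eigenvalue of geometric multiplicity at least two.

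For $d=3$ this is precisely what the normal form of Lemma~\ref{lem:eh} produces, and that normal form depends on the Eisenbud--Harris/Atkinson classification of linear spaces of matrices of bounded rank. No such classification is available for rank $\ge 4$, so the plan is to extract, for every $d$, a weaker structural fact and then induct on $r$. If $N_0\in V$ has rank exactly $d$, then $\ker N_0$ is totally isotropic for every form in $V$: writing matrices in block form with respect to $\C^r=\imag(N_0)\oplus\ker N_0$, so that $N_0=\mathrm{diag}(D,0)$ with $D\in\Sym_d(\C)$ invertible and the zero block of size $r-d$, a Schur-complement computation shows that $\rank(N_0+tN)\le d$ for all $t$ forces the $(r-d)\times(r-d)$ block of $N$ in the $\ker N_0$ position to vanish. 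Taking $N_0=A$, one then has $M=\begin{pmatrix}P&Q\\Q^{\mathrm T}&0\end{pmatrix}$ with $P=P(x,y,z)$ of size $d$ and generically invertible and $Q=Q(y,z)$ linear of size $d\times(r-d)$, and $\rank M\le d$ is equivalent to $Q^{\mathrm T}P^{-1}Q\equiv 0$, which on varying $x$ forces $\rank Q(y,z)\le\lfloor d/2\rfloor$ for all $y,z$. A pencil of rectangular matrices of bounded rank is governed by Kronecker's classification of matrix pencils; the hope is that each Kronecker block of $Q$ either produces an honest two-dimensional kernel of $I+M$ at a suitable point, or lets one split off a summand of $V$ (hence finishes by the reducibility remark) or reduce $r$, landing in a bounded list of base cases ($r$ small, or $d$ close to $r$, where classical symmetroid singularities appear).

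A complementary, more geometric route starts from the observation that $g=f^h$ is hyperbolic with respect to $[1:0:0:0]$, because for real $(x,y,z)$ the polynomial $\det(wI+M(x,y,z))$ has only real roots in $w$. The projection $\pi\colon\{g=0\}\to\P^2$ away from $[1:0:0:0]\notin\{g=0\}$ is then a finite morphism of degree $d$ whose fibres over real points are totally real, with branch curve $\{\mathrm{disc}_t(g)=0\}$ of degree $d(d-1)\ge 6$. Assuming $\{g=0\}$ smooth, one would combine this real structure with the quadratic (theta-characteristic-type) sheaf on $\{g=0\}$ coming from the symmetric determinantal representation to derive a contradiction for $d\ge 3$, in the spirit of Vinnikov's analysis of definite determinantal representations. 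The two routes meet at the same place: a branch point of $\pi$ lying over $w\ne 0$ at which $\pi$ fails to be an immersion is exactly a point where $I+M$ drops rank by two.

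The main obstacle is the absence of any substitute for Lemma~\ref{lem:eh} when the rank bound exceeds three; the Kronecker reduction sketched above is at present only a hope, and the crux is making it interact correctly with the symmetry of $A,B,C$ — the analogue of ruling out the exceptional skew-symmetric case in Lemma~\ref{lem:eh}. A second, subtler difficulty is that over $\C$ the matrix $M(x,y,z)$ is complex symmetric and may be non-diagonalizable, so a repeated eigenvalue need not yield a two-dimensional kernel; the locus of Jordan degenerations is a proper subvariety defined over $\R$ which on the relevant stratum meets $\R^3$ only where $M$ already vanishes, but one must still guarantee that the repeated eigenvalue produced is nonzero and semisimple. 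Equivalently, one must control the possibility that all rank drop of $wI+M$ is forced onto the plane $\{w=0\}$, where $\det(wI+M)$ vanishes to order $r-d\ge 1$ and the adjugate argument degenerates; here the smoothness of $\{g=0\}$ has to be used to show that $\{w=0\}\cap\{g=0\}$ cannot be too singular. None of this is visible in the case $d=3$, where the explicit normal form exhibits the two required branch points and their non-immersive character at once.
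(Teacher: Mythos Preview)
The statement you are attempting is presented in the paper as a \emph{conjecture}; the paper offers no proof, only the remark that the case $d=3$ (Theorem~\ref{thm:haupt}) rests on the Eisenbud--Harris/Atkinson classification of linear spaces of matrices of rank at most three, and that no analogous classification is known for higher rank. There is therefore no proof in the paper to compare against.

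Your proposal is openly a strategy sketch rather than a proof, and you say so. The reductions you carry out are sound: the passage from $\rank(I+M(p))\le r-2$ to a singular point of $\{g=0\}$ via the vanishing of the adjugate is correct; the reduction to $A,B,C$ having no common kernel and $V$ indecomposable over $\C$ is legitimate, the reducible case being disposed of because two surfaces in $\P^3$ must meet; and the Schur-complement argument that $\ker N_0$ is isotropic for every $N\in V$ when $N_0$ has maximal rank $d$ is correct, as is the consequence $\rank Q(y,z)\le\lfloor d/2\rfloor$.

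The genuine gap is precisely the one you name. Past the isotropy statement the argument becomes ``the hope is'' and ``one would combine''. The Kronecker decomposition of the pencil $Q(y,z)$ does not in any evident way interact with the constraint $Q^{\mathrm T}P(x,y,z)^{-1}Q\equiv 0$ for a one-parameter family of $P$'s so as to produce a repeated \emph{nonzero semisimple} eigenvalue, and nothing you write excludes all the rank drop being absorbed by the hyperplane $\{w=0\}$. You also leave the base case $r=d$ implicit: there the rank bound on $V$ is vacuous and the assertion is that every symmetric determinantal surface of degree $\ge 3$ in $\P^3$ is singular, a classical fact that you should isolate and cite rather than bury in an induction that is not actually set up. What you have is a coherent map of where a proof might live, matching the paper's own view that the question is open; it is not yet a proof.
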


\begin{figure}[h]
 \includegraphics[width=250pt]{./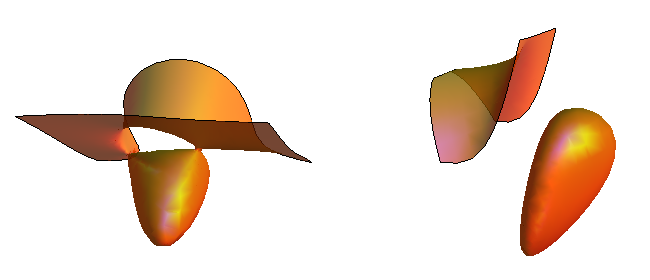}
 \caption{The algebraic boundary of the convex set $C_1$ in Example \ref{exp:twosing} has two real singularities
 whereas the algebraic boundary of $C_2$ has no real singularities but two complex singularities.}
 \label{fig:tworeal}
\end{figure}

\noindent \textbf{Acknowledgements.}
This work is part of my PhD thesis. I would like
to thank my advisor Claus Scheiderer for his encouragement and the
Studienstiftung des deutschen Volkes for their financial and ideal support.
I thank Bernd Sturmfels for helpful remarks and discussions.
I also thank Greg Blekherman, Jean B. Lasserre, Pablo Parrilo and Cynthia Vinzant.

\bibliographystyle{plain}
\bibliography{paper}

\end{document}